\documentclass[11pt]{amsart}
\usepackage{amsthm,amssymb,url,hyperref}
\usepackage{fullpage}
\usepackage[dvipsnames]{xcolor}
\usepackage{multirow}
\usepackage{tabulary}
\usepackage{booktabs}
\usepackage{changepage}
\usepackage{caption}
\usepackage{comment}
\usepackage{cleveref}

\theoremstyle{plain}
\newtheorem{theorem}{Theorem}[section]

\newtheorem{corollary}[theorem]{Corollary}

\newtheorem{proposition}[theorem]{Proposition}

\theoremstyle{definition}

\def\img#1{\mathrm{Im}(#1)}
\def\aut#1{{\mathrm{Aut}(#1)}}
\def\Z{\mathbb Z}

\def\Q{\mathrm{Aff}}

\def\m#1#2#3#4{\left(\begin{smallmatrix} #1 & #2 \\ #3 & #4\end{smallmatrix}\right)}
\def\mm#1#2#3#4{\left(\begin{matrix} #1 & #2 \\ #3 & #4\end{matrix}\right)}

\newcommand{\zobr}{1-\varphi-\psi}
\newcommand{\zobrs}{1-2\varphi}
\newcommand{\Gfg}{G_{\varphi, \psi}}
\renewcommand{\vec}[1]{\mathbf{#1}}
\newcolumntype{L}[1]{>{\raggedright\let\newline\\\arraybackslash\hspace{0pt}}m{#1}}
\newcolumntype{C}[1]{>{\centering\let\newline\\\arraybackslash\hspace{0pt}}m{#1}}
\newcolumntype{R}[1]{>{\raggedleft\let\newline\\\arraybackslash\hspace{0pt}}m{#1}}

\title{Paramedial quasigroups of prime and prime square order}

\author{\v Zaneta Semani\v sinov\'a}

\address{Department of Algebra, Faculty of Mathematics and Physics, Charles University, Prague, Czech Republic}
\email{zaneta.semanisinova@gmail.com}
\begin{document}

\thanks{Research partly supported by the project SVV-2020-260589 and by an institutional SFG grant}

\keywords{Paramedial quasigroups, affine quasigroups, enumeration of quasigroups, simple quasigroups.}

\subjclass{20N05, 05A15}

\begin{abstract}
We prove that, for every odd prime number $p$, there are $2p-1$ paramedial quasigroups of order $p$ and
$6p^2-p-1$
paramedial quasigroups of order $p^2$, up to isomorphism. We present a complete list of those which are simple.
\end{abstract}

\maketitle

\section{Introduction}\label{sec:intro} 

A binary operation $*$ on a set $A$ is called \emph{paramedial}, if it satisfies the identity
\[ (x*y)*(u*v)=(v*y)*(u*x).  \]
For example, subtraction in abelian groups is paramedial. A binary algebraic structure $(A,*)$ with a paramedial operation will be called a \emph{paramedial groupoid}. The structure will be called \emph{paramedial quasigroup}, if the operation $*$ is uniquely divisible from both sides (and thus forms a~latin square).
It is unknown to us where the paramedial law originated. It was considered, for example, in \cite{NK1} under the nickname $\beta_2$, for purely syntactic reasons, being, in a sense, dual to the more famous \emph{medial} law $(x*y)*(u*v)=(x*u)*(y*v)$. A systematic study of paramedial groupoids was initiated by Cho, Je\v zek and Kepka in \cite{CJK1}. The identity also appears in the context of Abel-Grassmann groupoids \cite{Sta-AG}.

In \cite{CJK2}, Cho, Je\v zek and Kepka initiated the classification of simple paramedial groupoids. They proved that finite simple paramedial groupoids come in four mutually exclusive types: quasigroups, zeropotent groupoids, commutative non-zeropotent non-quasigroups, and certain isotopes of rectangular bands. For the latter three types, a complete classification is available \cite{CJK2,CK1,CK2}. However, the only paper addressing the quasigroup case is \cite{SP} (exploiting the observation that the equivalence defined by $x\sim y\Leftrightarrow x*x=y*y$ is a congruence), but it does not contain any classification results.

Our goal is to enumerate paramedial quasigroups of small order and determine those which are simple.
The fundamental tool to study paramedial quasigroups (and many other classes of quasigroups) is their affine representation over abelian groups \cite[Theorem 9]{NK1}. Therefore, using the methods of \cite{Dra}, the enumeration reduces to an analysis of square roots and conjugacy classes in (certain subgroups of) automorphism groups of abelian groups; in the finite case, one can restrict to groups of prime power order.
In the present paper, we focus on cyclic groups and on elementary abelian groups of dimension 2. 

A similar approach was taken to enumerate medial quasigroups of order $p$ \cite{Kir,SV}, $p^2$ \cite{Sta-p2}, $p^3$ and $p^4$ in the idempotent case \cite{Hou}, and $64\neq p^k<128$ \cite{SV} ($p$ prime). One of our motivations was, whether it is feasible to use the same methods in the paramedial setting. As it turns out, it is, however, the analysis is much more complicated, and in one case, we rely on a non-trivial result from number theory.

Let $\mathrm{pq}(n)$ denote the number of paramedial quasigroups of order $n$ and $\mathrm{pq}(G)$ the number of paramedial quasigroups that admit an affine form over a group $G$, up to isomorphism (see Section 2 for the definition of an affine form). The main result of the present paper can be stated as follows.

\begin{theorem} \label{t:main} 
For every odd prime $p$,
\begin{align*} 
\mathrm{pq}(\Z_{p^k})&=2p^k-p^{k-1}+\sum_{i=0}^{k-2}{p^i}, \\
\mathrm{pq}(\Z_p^2)&=4p^2-2.
\end{align*}
Moreover, $\mathrm{pq}(\Z_{2^k})=2^{k+1}$ for $k>2$, $\mathrm{pq}(\Z_4)=4$, $\mathrm{pq}(\Z_2)=1$ and $\mathrm{pq}(\Z_2^2)=7$.
\end{theorem}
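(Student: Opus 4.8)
The plan is to reduce everything to linear data over the given abelian group $G$ and then to an orbit count, following \cite{Dra}. By the affine representation \cite[Theorem 9]{NK1}, an affine paramedial quasigroup over $G$ can be written as $x*y=\varphi(x)+\psi(y)+c$ with $\varphi,\psi\in\aut G$ and $c\in G$; substituting this into the paramedial identity and comparing coefficients shows at once that the identity reduces to the single condition $\varphi^2=\psi^2$. Thus $\mathrm{pq}(G)$ is the number of triples $(\varphi,\psi,c)$ with $\varphi^2=\psi^2$, counted up to isomorphism. For the isomorphism criterion I would invoke the standard affine machinery: $(\varphi,\psi,c)$ and $(\varphi',\psi',c')$ define isomorphic quasigroups exactly when some $\theta\in\aut G$ simultaneously conjugates $(\varphi,\psi)$ to $(\varphi',\psi')$ and sends $c$ into the coset $c'+\img{\zobr}$, the affine form over $G$ being unique up to precisely this action. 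Consequently $\mathrm{pq}(G)$ equals the sum, over simultaneous-conjugacy classes of pairs $(\varphi,\psi)$ with $\varphi^2=\psi^2$, of the number of orbits of the centralizer of $(\varphi,\psi)$ in $\aut G$ acting on the cokernel $G/\img{\zobr}$.

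For $G=\Z_{p^k}$ the automorphism group $\aut G\cong\Z_{p^k}^\times$ is abelian, so conjugacy is trivial and it remains to count pairs together with their constants. When $p$ is odd the only square roots of $1$ in $\Z_{p^k}^\times$ are $\pm 1$, so $\varphi^2=\psi^2$ forces $\psi=\pm\varphi$. If $\psi=-\varphi$ then $\zobr=1$ is invertible, the cokernel is trivial, and each of the $p^{k-1}(p-1)$ choices of $\varphi$ contributes a single quasigroup. If $\psi=\varphi$ then $\zobr=\zobrs=1-2\varphi$, and the number of constant-orbits for a fixed $\varphi$ is $v_p(1-2\varphi)+1$, the number of orbits of $\Z_{p^k}^\times$ acting by multiplication on $\Z_{p^k}/(1-2\varphi)\Z_{p^k}$. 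Since $2$ is invertible, $1-2\varphi$ runs bijectively over $\{w:w\not\equiv 1\pmod p\}$ as $\varphi$ runs over the units, and the layer-cake identity $\sum_{w\in\Z_{p^k}}v_p(w)=\sum_{j=1}^{k}p^{k-j}$ gives $\sum_\varphi\bigl(v_p(1-2\varphi)+1\bigr)=p^{k-1}(p-1)+\sum_{i=0}^{k-1}p^i$. Adding the two cases yields $2p^{k-1}(p-1)+\sum_{i=0}^{k-1}p^i=2p^k-p^{k-1}+\sum_{i=0}^{k-2}p^i$, as claimed.

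The even cyclic cases run along the same lines but are in fact cleaner: for every square root $\epsilon$ of $1$ in $\Z_{2^k}^\times$ the element $1+\epsilon$ is even, so $\zobr=1-(1+\epsilon)\varphi$ is odd, hence a unit, and every cokernel is trivial. Thus $\mathrm{pq}(\Z_{2^k})$ is simply the number of pairs $(\varphi,\epsilon\varphi)$, equal to $|\Z_{2^k}^\times|$ times the number of square roots of $1$; this is $4\cdot 2^{k-1}=2^{k+1}$ for $k>2$ (four square roots), $2\cdot 2=4$ for $k=2$, and $1$ for $k=1$. The group $\Z_2^2$ is small enough to be settled by a direct enumeration of the $18$ pairs $(\varphi,\psi)\in GL_2(\F_2)^2$ with $\varphi^2=\psi^2$, modulo the action of $GL_2(\F_2)\cong S_3$ and the constants, giving $7$.

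The genuinely hard case is $G=\Z_p^2$ with $p$ odd, where $\aut G=GL_2(\F_p)$ is non-abelian and the reduction to $\psi=\pm\varphi$ fails. Here I would fix the common value $\chi=\varphi^2=\psi^2$, classify $\chi$ by its rational canonical form, and for each type count its square roots in $GL_2(\F_p)$ and the resulting simultaneous-conjugacy classes of pairs $(\varphi,\psi)$, finally weighting each class by the centralizer-orbits on the cokernel of $\zobr$; the subgroup $\Gfg$ describing those centralizers can be written down explicitly in each type. The scalar, regular semisimple (split and non-split), and non-semisimple types can be handled by direct matrix computation. The main obstacle, and the point where a non-trivial number-theoretic input is unavoidable, is controlling the sum over those $\chi$ whose number of square roots depends on the arithmetic of $p$: assembling the contributions into the closed form $4p^2-2$ requires an identity for a Legendre-symbol (character) sum, equivalently a count of $\F_p$-points governed by whether certain discriminants are quadratic residues, rather than a purely formal manipulation. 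Establishing that identity, and verifying that no quasigroup is counted twice across the different conjugacy types, is where the bulk of the work will lie.
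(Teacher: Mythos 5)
Your framework is the right one and coincides with the paper's (Theorem \ref{t:alg}, quoted from Dr\'apal): triples $(\varphi,\psi,c)$ with $\varphi^2=\psi^2$, taken up to simultaneous conjugacy of the pair and up to the action of the joint centralizer on $G/\img{\zobr}$. Your treatment of the cyclic cases is correct and complete: for odd $p$ the reduction to $\psi=\pm\varphi$ matches the paper, and your summation $\sum_\varphi\bigl(v_p(1-2\varphi)+1\bigr)$ via the layer-cake identity is in fact a little slicker than the paper's case split on $i=v_p(1-2\varphi)$ with $\Gfg=\{0,p^0,\dots,p^{i-1}\}$; likewise your observation that $1+\epsilon$ is even for every square root $\epsilon$ of $1$ in $\Z_{2^k}^\times$ cleanly replaces the paper's explicit use of $\Z_{2^k}^\times\cong\Z_2\times\Z_{2^{k-2}}$.

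The gap is the case $\mathrm{pq}(\Z_p^2)=4p^2-2$, which you outline but do not prove, and which is where essentially all of the work in the paper lies. What is missing concretely: (i) the computation of the square-root sets $S_\varphi=\{\psi:\psi^2=\varphi^2\}$ for each conjugacy type of $\varphi$ in $GL(2,p)$ --- the paper does this via Cayley--Hamilton, and the delicate point is that whenever $\varphi^2$ is a scalar matrix $cI$ one gets the large family $\left(\begin{smallmatrix}k&l\\m&-k\end{smallmatrix}\right)$ with $k^2+lm=c$, which affects the types $\varphi=aI$, $\varphi=\mathrm{diag}(a,-a)$, and the non-split type with $\varphi^2$ scalar; (ii) the orbit decomposition of $S_\varphi$ under $C(\varphi)$ --- in the non-split case $\varphi=\left(\begin{smallmatrix}0&1\\a&0\end{smallmatrix}\right)$ with $a$ a non-square this cannot be done by listing representatives, and the paper resorts to Burnside's lemma to show there are two singleton orbits and $p-2$ orbits of size $p+1$; (iii) the identification of which pairs have $\mathrm{rank}(\zobr)=1$ and hence $|\Gfg|=2$ --- here the number-theoretic input you anticipate is the count of exactly $p+1$ points on the conic $k^2-al^2+(1-2a)l-a=0$, together with the non-obvious fact that all $p+1$ solutions lie in a \emph{single} $C(\varphi)$-orbit, which can only be concluded after the orbit sizes from (ii) are known; and (iv) the final bookkeeping across the four conjugacy types. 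None of these steps is routine, and the closed form $4p^2-2$ cannot be obtained without them; as written, this half of the theorem is asserted rather than proved.
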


\begin{corollary}
For every odd prime $p$,
\begin{align*} 
\mathrm{pq}(p)&=2p-1, \\ 
\mathrm{pq}(p^2)&=6p^2-p-1.
\end{align*}
\end{corollary}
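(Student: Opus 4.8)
The plan is to deduce the corollary from \Cref{t:main} by summing over the abelian groups of the relevant order. By the affine representation of paramedial quasigroups, every paramedial quasigroup of order $n$ admits an affine form over some abelian group of order $n$; moreover, the underlying abelian group of an affine quasigroup is determined up to isomorphism by the quasigroup (this is part of the affine-representation framework recalled in Section 2 and underlies the methods of \cite{Dra} employed throughout). Consequently the families of quasigroups counted by $\mathrm{pq}(G)$, as $G$ ranges over the isomorphism classes of abelian groups of order $n$, are pairwise disjoint, and
\[
\mathrm{pq}(n) = \sum_{G} \mathrm{pq}(G),
\]
the sum being taken over isomorphism classes of abelian groups $G$ with $|G| = n$.

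For $n = p$ there is a single abelian group, $\Z_p$, so $\mathrm{pq}(p) = \mathrm{pq}(\Z_p)$. Setting $k = 1$ in the first formula of \Cref{t:main}, the trailing sum $\sum_{i=0}^{k-2} p^i = \sum_{i=0}^{-1} p^i$ is empty and equals $0$, whence $\mathrm{pq}(\Z_p) = 2p - p^0 + 0 = 2p - 1$, as claimed. For $n = p^2$ there are exactly two abelian groups, $\Z_{p^2}$ and $\Z_p^2$, so $\mathrm{pq}(p^2) = \mathrm{pq}(\Z_{p^2}) + \mathrm{pq}(\Z_p^2)$. Setting $k = 2$ in the first formula gives $\mathrm{pq}(\Z_{p^2}) = 2p^2 - p + p^0 = 2p^2 - p + 1$, while the second formula gives $\mathrm{pq}(\Z_p^2) = 4p^2 - 2$. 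Adding these yields $\mathrm{pq}(p^2) = (2p^2 - p + 1) + (4p^2 - 2) = 6p^2 - p - 1$.

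The only genuine content beyond arithmetic is the disjointness of the families, that is, the fact that no single paramedial quasigroup can be affine over two non-isomorphic groups of the same order; I would treat this as the main point to secure, invoking the invariance of the underlying group of an affine quasigroup. Everything else reduces to evaluating the two formulas of \Cref{t:main} at $k = 1$ and $k = 2$ and handling the empty sum in the first case and the single-term sum in the second.
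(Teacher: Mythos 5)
Your proposal is correct and matches the paper's reasoning: Section~\ref{s2} already records that $\mathrm{pq}(n)=\sum_G \mathrm{pq}(G)$ over isomorphism classes of abelian groups of order $n$ (justified by the fact that isotopic groups are isomorphic, so no quasigroup is affine over two non-isomorphic groups), and the corollary is then exactly the evaluation of Theorem~\ref{t:main} at $k=1$ and $k=2$ that you carry out. Nothing further is needed.
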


The affine forms of the quasigroups over $\Z_p^2$ can be found in Table \ref{tab:main}, except for one case, in which the quasigroups were enumerated using Burnside's lemma without finding them explicitly. 
The proof of Theorem \ref{t:main} occupies most of the paper. 
In Section \ref{s2} we describe the enumeration method. In Section \ref{s3} we apply the method to cyclic groups and in Section \ref{s4} to elementary abelian groups of dimension 2. 

All quasigroups of prime order are simple. Simple paramedial quasigroups of prime square order will be classified in Section \ref{s5}, the results are summarized in Table \ref{tab:simple}.

\section{Enumeration method}\label{s2}

Given an abelian group $G=(G,+)$, automorphisms $\varphi,\psi$ of $G$ and an element $c\in G$, define a~new operation $*$ on the set $G$ by
\begin{displaymath}
    x*y = \varphi(x)+\psi(y)+c.
\end{displaymath}
The resulting quasigroup $(G,*)$ is said to be \emph{affine over the group} $G$ and it will be denoted by $\Q(G,+,\varphi,\psi,c)$; the quintuple $(G,+,\varphi,\psi,c)$ is called an \emph{affine form} of $(G,*)$.

\begin{theorem}[{N\v emec and Kepka \cite[Theorem 9]{NK1}}]
A quasigroup $(Q,*)$ is paramedial if and only if there is an abelian group $G=(G,+)$, a pair of automorphisms $\varphi,\psi$ of $G$ satisfying $\varphi^2=\psi^2$, and $c\in G$ such that $Q=\Q(G,+,\varphi,\psi,c)$.
\end{theorem}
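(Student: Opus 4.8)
The plan is to prove the two implications separately, since the statement is an equivalence.

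\emph{Sufficiency.} Given an abelian group $(G,+)$, automorphisms $\varphi,\psi$ with $\varphi^2=\psi^2$, and $c\in G$, I would simply expand both sides of the paramedial law for $x*y=\varphi(x)+\psi(y)+c$. Using additivity of $\varphi,\psi$,
\begin{align*}
(x*y)*(u*v)&=\varphi^2(x)+\varphi\psi(y)+\psi\varphi(u)+\psi^2(v)+\varphi(c)+\psi(c)+c,\\
(v*y)*(u*x)&=\varphi^2(v)+\varphi\psi(y)+\psi\varphi(u)+\psi^2(x)+\varphi(c)+\psi(c)+c.
\end{align*}
Their difference equals $(\varphi^2-\psi^2)(x)-(\varphi^2-\psi^2)(v)$. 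Since $\varphi^2-\psi^2$ is an endomorphism of the abelian group and hence fixes $0$, this difference vanishes for all $x,v$ if and only if $\varphi^2-\psi^2=0$. Thus $\varphi^2=\psi^2$ is exactly the condition turning the affine operation into a paramedial one, which settles the ``if'' direction.

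\emph{Necessity.} Now suppose $(Q,*)$ is paramedial; here I must manufacture the abelian group and the automorphisms, following the pattern of Toyoda's theorem for medial quasigroups. Fix any $e\in Q$ and let $L_e,R_e$ denote the bijective translations $L_e(x)=e*x$ and $R_e(x)=x*e$. I would define a new operation $x+y:=R_e^{-1}(x)*L_e^{-1}(y)$, the principal isotope of $*$ at $e$; a direct check shows $(Q,+)$ is a loop with identity $0:=e*e$. With this loop in hand I would put $c:=0*0$, $\varphi(x):=(x*0)-c$ and $\psi(y):=(0*y)-c$, subtraction taken in $(Q,+)$, so that $\varphi(0)=\psi(0)=0$ and the sought affine form $x*y=\varphi(x)+\psi(y)+c$ reduces to the single identity
\[ x*y=(x*0)+(0*y)-(0*0). \]

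The substantive and most delicate part is to show, using only the paramedial law, that $(Q,+)$ is actually a commutative, associative group, that $\varphi$ and $\psi$ are automorphisms of it, and that the displayed identity holds. I expect this identity-chasing to be the main obstacle: as in the medial case one produces consequences of the defining law by specializing variables and cancelling, but the paramedial law exchanges the \emph{outer} arguments $x$ and $v$ rather than the inner pair, so the bookkeeping differs at each step and commutativity of $+$ has to be extracted with care. Once $(Q,+)$ is known to be an abelian group and the affine representation is in place, the relation $\varphi^2=\psi^2$ is automatic: by the computation already carried out in the sufficiency part, the two sides of the paramedial law differ by $(\varphi^2-\psi^2)(x)-(\varphi^2-\psi^2)(v)$, and since $*$ is paramedial these sides coincide, forcing $(\varphi^2-\psi^2)(x)=(\varphi^2-\psi^2)(v)$ for all $x,v$ and hence $\varphi^2=\psi^2$. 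This completes the plan.
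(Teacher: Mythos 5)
The ``if'' direction of your argument is complete and correct: the expansion of both sides of the paramedial law is right, the difference is indeed $(\varphi^2-\psi^2)(x)-(\varphi^2-\psi^2)(v)$, and setting $v=0$ shows this vanishes identically exactly when $\varphi^2=\psi^2$. Note, however, that the paper does not prove this theorem at all --- it is quoted from N\v emec and Kepka \cite[Theorem 9]{NK1} --- so the comparison here is against the known proof of that result rather than against anything in the present text.

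The ``only if'' direction has a genuine gap. What you have written is the standard \emph{setup} (pass to the principal isotope $(Q,+)$ at a point, put $c=0*0$, $\varphi(x)=(x*0)-c$, $\psi(y)=(0*y)-c$), followed by an explicit admission that the substantive work --- proving from the paramedial law alone that $(Q,+)$ is associative and commutative, that $\varphi$ and $\psi$ are additive (not merely bijections fixing $0$), and that the identity $x*y=(x*0)+(0*y)-(0*0)$ holds --- remains to be done. That work is the entire content of the theorem: a priori the principal isotope of a quasigroup is only a loop, and nothing in your text extracts associativity, commutativity, or linearity from the hypothesis. Your closing observation that $\varphi^2=\psi^2$ then ``is automatic'' is fine, but it only kicks in after the affine representation over an abelian group has been established, which is precisely the step you defer. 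There is also a small bookkeeping mismatch in the setup itself: with $x+y:=R_e^{-1}(x)*L_e^{-1}(y)$ for an arbitrary $e$, the identity of the loop is $e*e$, and the translations appearing in the inversion formula $x*y=R_e(x)+L_e(y)$ are those at $e$, not at $0$; to get the clean form $x*y=(x*0)+(0*y)-(0*0)$ you should take the isotope at $0$ itself (i.e., $x+y=R_0^{-1}(x)*L_0^{-1}(y)$) or otherwise reconcile the two base points. As it stands the proposal is an outline of the right strategy with the decisive identity-chasing missing.
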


It follows from the classification of finite abelian groups that $\mathrm{pq}(G\times H)=\mathrm{pq}(G)\cdot \mathrm{pq}(H)$ whenever $G,H$ are abelian groups of coprime order (see \cite{HR} for a gentle introduction into automorphisms of finite abelian groups). Since isotopic groups are isomorphic \cite[Proposition 1.4]{Smi}, a paramedial quasigroup cannot admit affine forms over two non-isomorphic groups, and thus $\mathrm{pq}(n)=\sum \mathrm{pq}(G)$ where the sum runs over all isomorphism representatives of abelian groups of order $n$. These two observations imply that the function $\mathrm{pq}(n)$ is multiplicative.

We will follow the enumeration procedure described in detail in \cite{SV}. It is based on the following theorem, which is a special case of {{\cite[Theorem 3.2]{Dra}} for paramedial quasigroups. 

\begin{theorem}\label{t:alg}
Let $G$ be an abelian group. The isomorphism classes of paramedial quasigroups affine over $G$ are in one-to-one correspondence with the elements of the set
\begin{displaymath}
    \{(\varphi,\psi,c):\varphi\in X,\,\psi\in Y_\varphi,\,c\in G_{\varphi,\psi}\},
\end{displaymath}
where 
\begin{itemize}
	\item $X$ is a set of conjugacy class representatives of the group $\aut{G}$,
	\item $Y_\varphi$ is a set of orbit representatives of the conjugation action of the centralizer $C_\aut{G}(\varphi)$ on the set $S_\varphi=\{\psi\in\aut G:\psi^2=\varphi^2\}$,
	\item $G_{\varphi,\psi}$ is a set of orbit representatives of the natural action of $C_\aut{G}(\varphi)\cap C_\aut{G}(\psi)$ on the set $G/\img{1-\varphi-\psi}$.
\end{itemize}
\end{theorem}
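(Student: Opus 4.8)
The plan is to reduce the classification to the affine representation theorem of N\v emec and Kepka and then to organize the isomorphism relation among affine forms into three successive orbit computations. By that theorem every paramedial quasigroup affine over $G$ has the form $\Q(G,+,\varphi,\psi,c)$ with $\varphi,\psi\in\aut G$ satisfying $\varphi^2=\psi^2$ and $c\in G$, so it suffices to describe when two such triples yield isomorphic quasigroups and then to select exactly one triple per isomorphism class.

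First I would establish the \emph{isomorphism criterion}. A direct computation expanding the condition that $f(x)=\theta(x)+d$ (with $\theta\in\aut G$, $d\in G$) carries the operation of $\Q(G,+,\varphi,\psi,c)$ to that of $\Q(G,+,\varphi',\psi',c')$, comparing the coefficients of $x$, the coefficients of $y$, and the constant terms, shows that $f$ is such an isomorphism exactly when
\[ \varphi'=\theta\varphi\theta^{-1},\qquad \psi'=\theta\psi\theta^{-1},\qquad c'-\theta(c)=(1-\varphi'-\psi')(d). \]
The substantive point, which is precisely the content specialized from \cite[Theorem 3.2]{Dra}, is the converse: \emph{every} isomorphism between affine quasigroups over the same abelian group is of this affine shape. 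Granting this, two triples are equivalent if and only if there is a single $\theta\in\aut G$ that simultaneously conjugates $\varphi$ to $\varphi'$ and $\psi$ to $\psi'$ and sends the class of $c$ in $G/\img{1-\varphi-\psi}$ to the class of $c'$ in $G/\img{1-\varphi'-\psi'}$.

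Next I would turn this equivalence into the nested orbit structure of the statement, using the elementary principle that the orbits of a group acting on a set of triples are enumerated by first choosing orbit representatives of the first coordinate and then, for each, orbit representatives of the corresponding stabilizer acting on the remaining data. Since $\theta\varphi\theta^{-1}=\varphi$ forces $\theta\in C_\aut{G}(\varphi)$, I first pick a conjugacy-class representative $\varphi\in X$ and pass to the stabilizer $C_\aut{G}(\varphi)$. For the second coordinate I must check that $C_\aut{G}(\varphi)$ preserves $S_\varphi=\{\psi:\psi^2=\varphi^2\}$; this holds because $(\theta\psi\theta^{-1})^2=\theta\psi^2\theta^{-1}=\theta\varphi^2\theta^{-1}=\varphi^2$ as $\theta$ commutes with $\varphi^2$. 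Choosing orbit representatives $\psi\in Y_\varphi$ of this conjugation action fixes both $\varphi$ and $\psi$, leaving the residual symmetry $C_\aut{G}(\varphi)\cap C_\aut{G}(\psi)$. Finally, for the constant I verify that this joint centralizer acts on $G/\img{1-\varphi-\psi}$: any $\theta$ commuting with $\varphi$ and $\psi$ commutes with $1-\varphi-\psi$ and hence maps $\img{1-\varphi-\psi}$ onto itself, so the induced map $\bar c\mapsto\overline{\theta(c)}$ is well defined and matches the displayed criterion, the term $(1-\varphi-\psi)(d)$ being absorbed into the quotient. Selecting orbit representatives $c\in G_{\varphi,\psi}$ then yields exactly one triple per isomorphism class, which is the asserted bijection.

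I expect the main obstacle to be the converse half of the isomorphism criterion, namely the absence of non-affine isomorphisms between these quasigroups; this is exactly where the cited general theorem of Dr\'apal does the work, and reproving it from scratch would require recovering the affine structure of a quasigroup from its multiplication alone. Everything after that is bookkeeping: the two invariance verifications above guaranteeing that the three actions are well defined, together with the iterated orbit--stabilizer reduction.
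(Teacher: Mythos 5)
Your proposal is correct and follows essentially the same route as the paper, which gives no independent proof but presents the statement as a special case of \cite[Theorem 3.2]{Dra} via the procedure detailed in \cite{SV}: the affine isomorphism criterion $\varphi'=\theta\varphi\theta^{-1}$, $\psi'=\theta\psi\theta^{-1}$, $c'-\theta(c)\in\img{1-\varphi'-\psi'}$, followed by the iterated orbit--stabilizer bookkeeping you describe. You also correctly isolate the only non-trivial ingredient, namely that isomorphism of two affine forms over $G$ is always witnessed by an affine map, which is exactly what the cited theorem of Dr\'apal supplies.
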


Every triple $(\varphi,\psi,c)$ yields a paramedial quasigroup $\Q(G,\varphi,\psi,c)$, hence, an explicit construction of the sets $X,Y_\varphi,G_{\varphi,\psi}$ provides an explicit construction of the quasigroups.

\section{Enumeration over cyclic groups}\label{s3}

In the present section, we apply Theorem \ref{t:alg} on the cyclic group $G=\Z_{p^k}$ for $p$ prime number. We will identify automorphisms with the corresponding invertible elements modulo $p^k$, i.e., $\aut{\Z_{p^k}}=\Z_{p^k}^* =\{a\in \Z_{p^k}:\ p\nmid a\}$. 

\begin{proof}[Proof of Theorem \ref{t:main}, case $\mathrm{pq}(\Z_{p^k})$]
Since the automorphism group is commutative, the conjugation action is trivial and we have to consider every pair $(\varphi,\psi)\in\Z_{p^k}^*\times\Z_{p^k}^*$ such that $\varphi^2=\psi^2$. For each pair, we consider a complete set of orbit representatives $G_{\varphi,\psi}$ of the action of $\Z_{p^k}^*$ on $\Z_{p^k}/\img{1-\varphi-\psi}$. 

\textbf{Case $p$ odd}. The equality $\varphi^2=\psi^2$ in $\Z_{p^k}$ is equivalent to $(\varphi+\psi)(\varphi-\psi)=0$, therefore the only possible cases are:

\begin{enumerate}
\item $\psi=-\varphi$
\item $\psi=\varphi$
\item $p\mid\varphi+\psi$ and $p\mid\varphi-\psi$
\end{enumerate}
The last case implies that $p\mid 2\varphi$. Since $p$ is odd, we have a contradiction with $\varphi\in\Z_{p^k}^*.$ We will consider the first two cases.

Case $\psi=-\varphi$. The group $\Z_{p^k}/\img{\zobr}$ is trivial, hence we choose $\Gfg=\{0\}$ and the number of possible triples $(\varphi, \psi, c)$ is
\[p^k-p^{k-1}.\]

Case $\psi=\varphi$. Then $\img{1-2\varphi}=p^i\Z_{p^k}$, where $p^i=\mathrm{gcd}(1-2\varphi, p^k)$.

\textit{Subcase $i=0$:}\\
$\img{\zobrs}=\Z_{p^k}$ and $p\nmid1-2\varphi$, hence $\varphi\not\equiv2^{-1}(\textup{mod}\;p)$. The group $\Z_{p^k}/\img{\zobrs}$ is trivial and we choose $\Gfg=\{0\}$. The number of possible automorphisms $\varphi$ and also the number of corresponding triples $(\varphi, \psi, c)$ is
\[p^k-2p^{k-1}.\]

\textit{Subcase $i\in\{1, \dots, k-1\}$} (only for $k>1$):\\
$\img{\zobrs}$ is equal to $p^i\Z_{p^k}$ if and only if $p^i\mid \zobrs$ and $p^{i+1}\nmid \zobrs$, equivalently
$\varphi\equiv2^{-1}(\textup{mod}\;p^i)$ and 
$\varphi\not\equiv2^{-1}(\textup{mod}\;p^{i+1})$. The elements of $\Z_{p^k}^*$ act on the representants of cosets in $\Z_{p^k}/\img{\zobrs}$ by multiplication, therefore we can choose the set $\Gfg$ consisting of zero and the powers of $p$, in particular $\Gfg=\{0, p^0, \dots,  p^{i-1}\}$. The number of possible triples $(\varphi, \psi, c)$ in this case is \[(p^{k-i}-p^{k-i-1})(i+1).\]

\textit{Subcase $i=k$:} \\
$\img{\zobrs}=\{0\}$, which occurs if and only if $\varphi=2^{-1}$ in $\Z_{p^k}^*$. Similarly to the previous case, we can choose $\Gfg=\{0,\,p^0, \dots,\, p^{k-1}\}$. The number of possibilities in this case is equal to the number of possible constants $c$, which is
\[k+1.\]

Summing up the contributions to $\mathrm{pq}(\Z_{p^k})$ in all cases, we get 
\[(p^k-p^{k-1})+(p^k-2p^{k-1})+\sum_{i=1}^{k-1}{(p^{k-i}-p^{k-i-1})(i+1)}+(k+1),\]
which, after rearrangement, gives
\[2p^k-3p^{k-1}+(2p^{k-1}+p^{k-2}+p^{k-3}+\dots+p+1)=2p^k-p^{k-1}+\sum_{i=0}^{k-2}{p^i},\]
which is the value of $\mathrm{pq}(\Z_{p^k})$.

\textbf{Case $p=2$.} The special cases with $k=1, 2$ are straightforward and left to the reader. We count the number $\mathrm{pq}(\Z_{2^k})$ for $k>2$. The automorphism group decomposes as $\aut{\Z_{2^k}}=\Z_{2^k}^*\simeq \Z_2\times\Z_{2^{k-2}}$, let us denote the isomorphism $f:\Z_{2^k}^*\rightarrow\Z_2\times\Z_{2^{k-2}}$. Then every pair of automorphisms $(\varphi, \psi)$ corresponds to the pair $(f(\varphi), f(\psi))$, where $f(\varphi)=(a_1, a_2)$, $f(\psi)=(b_1, b_2)$, $a_1, b_1\in\Z_2$ and $a_2, b_2 \in \Z_{2^{k-2}}$. Reformulating the condition $\varphi^2=\psi^2$ in terms of this isomorphism, we get the equality $2(a_1,  a_2)=2(b_1, b_2)$, which holds if and only if
\[a_2 \equiv b_2\; (\textup{mod}\;2^{k-3}).\]

For a fixed element $\varphi\in \textup{Aut}(\Z_{2^k})$, there are precisely two elements $b_2\in\Z_{2^{k-2}}$ satisfying the stated condition. Since the element $b_1\in\Z_2$ can be chosen arbitrarily, there are four possible choices of the automorphism $\psi\in\aut{\Z_{2^k}}.$ 

It remains to determine the set $\Gfg$. Since $2\nmid 1-\varphi-\psi$, we get Im$(1-\varphi-\psi)=\Z_{2^k}$, hence $\Z_{2^k}/\textup{Im}(\zobr)$ is a trivial group and we can choose $\Gfg=\{0\}$. Therefore, the number of possible triples $(\varphi, \psi, c)$ is  \[(2^k-2^{k-1})\cdot 4=2^{k+1}.\]

\end{proof}

\section{Enumeration over elementary abelian groups of dimension 2}\label{s4}

\subsection{An auxilliary number-theoretic result}\label{s4:solutions}
In this subsection we state a number-theoretic result about the number of solutions of a particular equation in the field $\Z_p$, which will be used in Subsection \ref{s4:enum}. The result is obtained by methods from \cite[Section 2]{conicsmodp}.

Let $p$ be an odd prime number and $a\in \Z_p$ a non-square modulo $p$ (in particular, $a\neq0$).
We want to determine the number of solutions of the equation
\begin{equation*}
q(k,l)=k^2-al^2+(1-2a)l-a=0.
\end{equation*}
We will show that the number of pairs $(k, l)\in \Z_p^2$ satisfying the equation is $p+1$.

Let us denote $\vec{x}=(k, l)^T$ and $M=\left( \begin{smallmatrix}
    1& 0  \\
    0 & -a \\
    \end{smallmatrix} \right)$. Then we can restate the equation as follows:
    \[q(\vec{x})=\vec{x}^TM\vec{x}+(0,\quad1-2a)\vec{x}-a=0.\]
For any vector $\vec{s}$, the equations $q(\vec{x})=0$ and $q'(\vec{x})=q(\vec{x}+\vec{s})=0$ have the same number of solutions. Let $\vec{s}=-\frac{1}{2}M^{-1}\left( \begin{smallmatrix}
    0\\
    1-2a \\
    \end{smallmatrix} \right)$. Simplifying the equation, we obtain (we use symmetry of the matrices $M$ and $M^{-1}$)
\begin{align*}
    q'(\vec{x})&=(\vec{x}+\vec{s})^TM(\vec{x}+\vec{s})+(0,\quad 1-2a)(\vec{x}+\vec{s})-a\\
    &=\vec{x}^TM\vec{x}+(\vec{s}^TM\vec{s}+(0,\quad1-2a)\vec{s}-a)\\
    &=k^2-al^2+(2^{-2}a^{-1}-1).
\end{align*}
The number of solutions $(k, l)$ follows from the following theorem ($2^{-2}a^{-1}-1\neq0$, since $a$ is a~non-square). 

\begin{theorem}[\protect{\cite[Theorem 2.9]{conicsmodp}}]\label{pcries}
Let $p$ be an odd prime and $g(x,y)=rx^2+sy^2+t$ be a polynomial over $\Z_p$ satisfying the following:
\begin{itemize}
    \item $t\neq0,$
    \item $-rs$ is a non-square modulo $p$ (in particular $rs\neq0$).
\end{itemize}
Then the number of solutions $(x, y)$ of the equation $g(x,y)=0$ is $p+1$.
\end{theorem}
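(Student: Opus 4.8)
The plan is to count $N=\#\{(x,y)\in\Z_p^2: rx^2+sy^2+t=0\}$ by the standard method of quadratic character sums, which I expect to reproduce the count $p+1$ cleanly. Write $\chi$ for the Legendre symbol (quadratic character) on $\Z_p$, with the convention $\chi(0)=0$; then for any $u\in\Z_p$ the number of $x$ with $x^2=u$ equals $1+\chi(u)$ (this gives $1$ at $u=0$, $2$ at nonzero squares, $0$ at non-squares). Setting $u=x^2$, $v=y^2$ turns the equation into the \emph{linear} constraint $ru+sv=-t$, and since the two square-root counts are independent,
\[
N=\sum_{\substack{u,v\in\Z_p\\ ru+sv=-t}}(1+\chi(u))(1+\chi(v))
=\sum_{ru+sv=-t}1+\sum_{ru+sv=-t}\chi(u)+\sum_{ru+sv=-t}\chi(v)+\sum_{ru+sv=-t}\chi(u)\chi(v).
\]
Here I use $r,s\neq0$, which is guaranteed by $-rs$ being a non-square.

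The first three sums are routine. Because $r,s\neq0$, the line $ru+sv=-t$ has exactly $p$ points, so the first sum is $p$; parametrizing by $u$ (with $v$ then determined) shows the second sum equals $\sum_{u\in\Z_p}\chi(u)=0$, and symmetrically the third vanishes. Thus everything reduces to the mixed sum $S=\sum_{ru+sv=-t}\chi(u)\chi(v)$, which is the crux of the argument.

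To evaluate $S$, substitute $v=s^{-1}(-t-ru)$ and use multiplicativity of $\chi$ (noting $\chi(s^{-1})=\chi(s)$):
\[
S=\chi(s)\sum_{u\in\Z_p}\chi\bigl(u(-t-ru)\bigr)=\chi(s)\sum_{u\in\Z_p}\chi\bigl(-ru^2-tu\bigr).
\]
The inner sum is $\sum_u\chi(a_2u^2+a_1u+a_0)$ with $a_2=-r$, $a_1=-t$, $a_0=0$, whose discriminant $a_1^2-4a_0a_2=t^2$ is nonzero precisely because $t\neq0$. By the standard evaluation of such sums (for instance Lidl--Niederreiter, Theorem 5.48), a nonzero discriminant forces the inner sum to equal $-\chi(a_2)=-\chi(-r)$. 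Hence $S=-\chi(s)\chi(-r)=-\chi(-rs)$, and since $-rs$ is a non-square we get $\chi(-rs)=-1$, so $S=1$. Combining, $N=p+0+0+1=p+1$, as claimed.

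The main obstacle is the evaluation of the mixed sum $S$, i.e.\ having on hand the closed form for $\sum_u\chi(a_2u^2+a_1u+a_0)$ and correctly tracking the two hypotheses: $t\neq0$ is exactly what makes the discriminant nonzero (ruling out the degenerate branch that would give $(p-1)\chi(a_2)$), while the non-square condition on $-rs$ fixes the final sign. An alternative route, presumably closer to the cited source, is geometric: homogenize to $rX^2+sY^2+tZ^2=0$ in $\mathbb{P}^2(\Z_p)$, observe that a smooth conic with a rational point carries $p+1$ points, and check there are no points at infinity because $Z=0$ forces $(X/Y)^2=-s/r$, which is unsolvable exactly when $-rs$ is a non-square; one would then need to verify smoothness and the existence of an affine point, which is where the $t\neq0$ and non-square hypotheses re-enter.
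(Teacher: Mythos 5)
Your proof is correct. Note, though, that the paper does not prove this statement at all: it imports it verbatim as \cite[Theorem 2.9]{conicsmodp}, so there is no internal argument to compare against. Your character-sum computation is a complete, self-contained replacement for that citation (up to the standard evaluation $\sum_{u}\chi(a_2u^2+a_1u+a_0)=-\chi(a_2)$ when $a_1^2-4a_0a_2\neq0$, which you correctly source to Lidl--Niederreiter), and you track both hypotheses exactly where they are needed: $t\neq0$ keeps the discriminant $t^2$ nonzero so the mixed sum does not degenerate to $(p-1)\chi(-r)$, and the non-square condition on $-rs$ pins the sign of $S=-\chi(-rs)=1$. The decomposition into the four sums, the count of $p$ points on the line $ru+sv=-t$, the vanishing of the two single-character sums, and the identity $\chi(s^{-1})=\chi(s)$ are all correct. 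Your closing remark about the projective-conic route (no points at infinity exactly when $-s/r$ is a non-square, i.e.\ $-rs$ is a non-square) is indeed closer in spirit to the cited source, which the paper's Subsection \ref{s4:solutions} uses only for the completing-the-square reduction to the diagonal form $k^2-al^2+(2^{-2}a^{-1}-1)=0$; either route is fine, and yours has the advantage of not requiring the existence of a rational point on the conic to be established separately.
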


\subsection{Calculating square roots of matrices}\label{s4:sqrroot}
We will introduce a method for calculating square roots of $2\times2$ matrices over the field $\Z_p$ for odd primes $p$. The method is based on the Cayley-Hamilton theorem and is adopted from \cite{sqrroot}.

Let $A$ be a $2\times2$ matrix over $\Z_p$, our aim is to find all matrices $\sqrt{A}$ satisfying $(\sqrt{A})^2=A$. In this context, the identity matrix will be denoted by $I$.

The Cayley-Hamilton theorem for $2\times2$ matrices asserts that 
\begin{equation}\label{C-Hth}
    M^2-\textup{tr}(M)M+\textup{det}(M)I=0.
\end{equation}
Let us denote $\tau=\textup{tr}(\sqrt{A})$, $\delta=\textup{det}(\sqrt{A})$, $T=\textup{tr}(A)$, $\Delta=\textup{det}(A)$. Clearly $\delta^2=\Delta$.
Setting $M$=$\sqrt{A}$ in (\ref{C-Hth}) yields  
\begin{equation}\label{CH-sqrt}
    A=\tau \sqrt{A}-\delta I.
\end{equation}

If we assume $\tau\neq0$, we obtain
\begin{equation}\label{sqrt}
    \sqrt{A}=\frac{1}{\tau}(A+\delta I).
\end{equation}

In order to express $\tau$, we use the Cayley-Hamilton theorem for $M=A$, which gives 
\begin{equation}\label{CH-normal}
    TA-\Delta I=A^2\overset{\text{(\ref{CH-sqrt})}}=(\tau\sqrt{A}-\delta I)^2=\tau^2A-2\tau\delta\sqrt{A}+\delta^2 I
    \overset{\text{(\ref{sqrt})}}=
       \tau^2A-2\delta(A+\delta I)+\delta^2 I=\tau^2A-2\delta A-\Delta I,
\end{equation}

and thus $TA=(\tau^2-2\delta)A$, which implies $T=\tau^2-2\delta$. For every square $x\in\Z_p$, we choose an element $y$ such that $y^2=x$ and denote it by $\sqrt{x}$. If the corresponding square roots in $\Z_p$ exist, we can express the square roots of $A$ in the form
\begin{equation}\label{sqrt-formula}
    \sqrt{A}=\frac{\pm 1}{\sqrt{T+2\delta}}(A+\delta I),
\end{equation}
where $\delta=\pm \sqrt{\Delta}$. Therefore, we obtain at most four square roots of $A$ with $\tau\neq0$.

\textit{Case $A\neq cI, c\in\Z_p$:} Since $A$ is not a multiple of the identity matrix, it follows from (\ref{CH-sqrt}) that $\tau\neq0$. Therefore we obtain all square roots of $A$ from (\ref{sqrt-formula}).

\textit{Case $A=cI$ for $c\in\Z_p$:}
We will search for all matrices $\sqrt{A}$ in the following form:
\begin{equation*}
\sqrt{A}=\begin{pmatrix}
k & l \\
m & n\\
\end{pmatrix}, \quad
\begin{pmatrix}
c & 0 \\
0 & c\\
\end{pmatrix}=
\begin{pmatrix}
k & l \\
m & n\\
\end{pmatrix}^2 =
\begin{pmatrix}
k^2+lm & l(k+n) \\
m(k+n) & n^2+lm\\
\end{pmatrix}
\end{equation*}

If $\tau\neq0$, then it follows from (\ref{CH-sqrt}) that $\delta \neq -c$, hence $\delta=c$. Then if $c$ is a square, by (\ref{sqrt-formula}) we obtain square roots in the form
\begin{equation*}
    \frac{\pm 1}{\sqrt{2c+2c}}(cI+cI)=
    \pm \begin{pmatrix}
    \sqrt{c} & 0 \\
     0 & \sqrt{c} \\
    \end{pmatrix} 
\end{equation*}
Otherwise, $\tau=0$, i.e., $n=-k$, so we obtain square roots in the form
\begin{equation*}
    \begin{pmatrix}
    k& l \\
    m & -k\\
    \end{pmatrix},\; k, l, m\in\Z_p,\; k^2+lm=c.
\end{equation*}

\subsection{Enumeration}\label{s4:enum}

Now, we are ready to apply Theorem \ref{t:alg} on the group $G=\Z_p^2$. We will identify automorphisms with their matrices, considering $\aut{G}=GL(2,p)$. 

\begin{table}[ht]
\[
\begin{array}{|l|l|} \hline
\varphi & C(\varphi) \\\hline
\mm a00a,\ a\neq 0 & GL(2,p) \\\hline
\mm a00b,\ 0<a<b & \left\{\mm u00v:\ u,v\neq 0\right\} \\\hline
\mm a10a,\ a\neq 0 & \left\{\mm uv0u:\ u\neq 0\right\} \\\hline
\mm 01ab,\ x^2-bx-a \text{ irreducible} & \left\{\mm uv{av}{u+bv}:\ u\neq0\text{ or }v\neq 0\right\}.\\\hline
\end{array}
\]
\caption{Conjugacy class representatives in $GL(2,p)$ and their centralizer subgroups.}
\label{tab:X}
\end{table}

\begin{proof}[Proof of Theorem \ref{t:main}, case $\mathrm{pq}(\Z_p^2)$]
Let $G=\Z_p^2$ with $p$ odd (the case $p=2$ is left to the reader). 
The set $X$ of conjugacy class representatives in $\aut{G}=GL(2,p)$ can be chosen as in Table \ref{tab:X}. The four types of representatives correspond to the diagonalizable matrices with one eigenvalue, the diagonalizable matrices with two distinct eigenvalues, the non-diagonalizable matrices with an eigenvalue in $\Z_p$, and the non-diagonalizable matrices with eigenvalues in the quadratic extension, respectively. The last case is represented by matrices $\m 01ab$ such that the polynomial $x^2-bx-a$ is irreducible over $\Z_p$. 

The centralizer subgroups are also displayed in Table \ref{tab:X} (here and later on, we will omit the index in the centralizer notation).

For every matrix $\varphi\in X$ we will do the following: We determine the set $S_\varphi$, using the method from Subsection \ref{s4:sqrroot} for calculating the square roots $\psi$ of the matrices $\varphi^2$. Then we will choose the orbit representatives of the conjugation action of the centralizer $C(\varphi)$ on the set $S_\varphi$ and form the set $Y_\varphi$. Finally, we will determine the set $\Gfg$ and calculate the number of triples $(\varphi,\,\psi,\,c):\:\varphi\in X,$ $\psi \in Y_\varphi,$ $c\in\Gfg$. The results are summarized in Table \ref{tab:main}.

The size of the set $\Gfg$ will be determined by the following procedure: If $\zobr$ is a regular matrix, then $G/\img{\zobr}$ is a trivial group and we can choose $\Gfg=\{\vec{0}\}$. If the rank of the matrix $\zobr$ is one, then $G/\img{\zobr}\simeq\Z_p$ and since all of the centralizer subgroups contain all matrices $\m u00u, u\neq 0$, we can take $\Gfg=\{\vec{0}, \vec{w}\}$, where $[\vec{w}]$ is any non-zero element of $G/\img{\zobr}$ (i.e., $\vec{w}\notin\img{\zobr}$). If the rank of the matrix $\zobr$ is zero, then the situation depends on the centralizers $C(\varphi)$, $C(\psi)$ and will be discussed separately in that case.

\textbf{Case $\varphi=\m a00a$}: 

\[S_\varphi=\left\{
    \left( \begin{smallmatrix}
    \pm a& 0  \\
    0 & \pm a\\
    \end{smallmatrix} \right);\,
    \left( \begin{smallmatrix}
    k& l \\
    m & -k\\
    \end{smallmatrix} \right):\, k^2+lm=a^2
    \right\}\]
All matrices in $S_\varphi$ are diagonalizable and the non-diagonal matrices have eigenvalues $a$ and $-a$. The centralizer $C(\varphi)$ is the whole $GL(2, p)$, therefore we can choose the set of representatives of the conjugation action of $C(\varphi)$ on $S_\varphi$ as
\[Y_\varphi= \left\{
    \left( \begin{smallmatrix}
    a& 0  \\
    0 & a\\
    \end{smallmatrix} \right),\;
    \left( \begin{smallmatrix}
    -a& 0  \\
    0 & -a\\
    \end{smallmatrix} \right),\;
    \left( \begin{smallmatrix}
    a& 0  \\
    0 & -a\\
    \end{smallmatrix} \right)
    \right\}.\]
    
Now, we will determine the set $\Gfg$ for every admissible pair $(\varphi, \psi)$:
    
    \begin{itemize}
    
        \item $\psi=\left( \begin{smallmatrix}
    a& 0  \\
    0 & a\\
    \end{smallmatrix} \right)$, 
    $1-\varphi-\psi=\left( \begin{smallmatrix}
    1-2a& 0  \\
    0 & 1-2a\\
    \end{smallmatrix} \right)$

The matrix $\zobr$ is regular if and only if $a\neq2^{-1}$. Then $|\Gfg|=1$ and this case contributes to $\mathrm{pq}(G)$ by $p-2$ possible triples $(\varphi, \psi, c)$. Otherwise, rank$(\zobr)=0$ and necessarily $a=2^{-1}$. Since $C(\varphi)\cap C(\psi)=GL(2, p)$, it is possible to choose $\Gfg=\{\vec{0},\,\vec{w}\}$, where $\vec{w}$ is an arbitrary non-zero vector. Therefore, this case contributes to $\mathrm{pq}(G)$ by $2$ possible triples.
    
        \item $\psi=\left( \begin{smallmatrix}
    -a& 0  \\
    0 & -a\\
    \end{smallmatrix} \right)$, 
    $1-\varphi-\psi=\left( \begin{smallmatrix}
    1& 0  \\
    0 & 1\\
    \end{smallmatrix} \right)$
    
    For any $a$ is rank$(\zobr)=2$, so $|\Gfg|=1$ and the number of possible triples $(\varphi, \psi, c)$ in this case is $p-1$.
    
        \item $\psi=\left( \begin{smallmatrix}
    a& 0  \\
    0 & -a\\
    \end{smallmatrix} \right)$, 
    $1-\varphi-\psi=\left( \begin{smallmatrix}
    1-2a& 0  \\
    0 & 1\\
    \end{smallmatrix} \right)$
    
    Similarly as in the first case, rank$(\zobr)=2$ if and only if $a\neq2^{-1}$ and then there are $p-2$ admissible triples $(\varphi, \psi, c)$. Otherwise, rank$(\zobr)=1$ and we choose $\Gfg=\{\vec{0},\,\vec{w}\}$, $\vec{w}\notin \text{Im}(\zobr)$, and get $2$ other triples.
    
    \end{itemize}
    
Summing up the contributions in this case we get $(p-2)+2+(p-1)+(p-2)+2=3p-1$.


\textbf{Case $\varphi=\m a00b$}: We need to distinguish two subcases.

\textit{Subcase $b\neq-a$}:
    \[S_\varphi=\left\{
    \left( \begin{smallmatrix}
    \pm a& 0  \\
    0 & \pm b\\
    \end{smallmatrix} \right),\;
    \left( \begin{smallmatrix}
    \pm a& 0  \\
    0 & \mp b\\
    \end{smallmatrix} \right)
    \right\}\]

Every $\psi\in S_\varphi$ commutes with the elements of $C(\varphi)$, therefore we choose $Y_\varphi=S_\varphi$.

\textit{Subcase $b=-a$}:
\[S_\varphi=\left\{
    \left( \begin{smallmatrix}
    a& 0  \\
    0 & a\\
    \end{smallmatrix} \right);\,
    \left( \begin{smallmatrix}
    - a& 0  \\
    0 & - a\\
    \end{smallmatrix} \right);\,
    \left( \begin{smallmatrix}
    k& l \\
    m & -k\\
    \end{smallmatrix} \right):\, k^2+lm=a^2
    \right\}\]
    
All diagonal matrices in $S_\varphi$ commute with the elements of $C(\varphi)$, hence they are all elements of $Y_\varphi$. In the non-diagonal matrices, we can change the non-zero non-diagonal elements by conjugation to any non-zero value, therefore we include a representative for matrices with $l=0$ and representatives for matrices with $l\neq0$. Hence we can choose the set

\[Y_\varphi=\left\{
    \left( \begin{smallmatrix}
    \pm a& 0  \\
    0 & \pm a\\
    \end{smallmatrix} \right);\;
    \left( \begin{smallmatrix}
    \pm a& 0  \\
    0 & \mp a\\
    \end{smallmatrix} \right);\;
    \left( \begin{smallmatrix}
    \pm a& 0  \\
    1 & \mp a\\
    \end{smallmatrix} \right); \;
    \left( \begin{smallmatrix}
    k& 1 \\
    a^2-k^2 & -k\\
    \end{smallmatrix} \right):\, k\in \Z_p
    \right\}.\]
    
Now, we are ready to discuss the structure of the set $\Gfg$ depending on the choice of automorphisms $\varphi, \psi$. Firstly we count the triples $(\varphi, \psi, c)$ where the matrix $\psi$ is diagonal for both subcases together, independently of the value of $b$.

\begin{itemize}
    
    \item $\psi=\left( \begin{smallmatrix}
    a& 0  \\
    0 & b\\
    \end{smallmatrix} \right)$, 
    $1-\varphi-\psi=\left( \begin{smallmatrix}
    1-2a& 0  \\
    0& 1-2b\\
    \end{smallmatrix} \right)$
    
    The matrix $\zobr$ is regular if and only if $a,\,b\neq2^{-1}$. Under these conditions, $|\Gfg|=1$ and number of admissible triples $(\varphi, \psi, c)$ is $\binom{p-2}{2}$. If this condition is not satisfied, rank$(\zobr)=1$ (because $a<b$) and $a=2^{-1}$ or $b=2^{-1}$. In this case we choose $\Gfg=\{\vec{0},\,\vec{w}\}$, where $\vec{w}\notin\text{Im}(\zobr)$, and number of possible triples is  $2(p-2)$.
    
        \item $\psi=\left( \begin{smallmatrix}
    -a& 0  \\
    0 & -b\\
    \end{smallmatrix} \right)$, 
    $1-\varphi-\psi=\left( \begin{smallmatrix}
    1& 0  \\
    0 & 1\\
    \end{smallmatrix} \right)$
    
    Rank of the matrix $\zobr$ is always two, therefore $|\Gfg|=1$ and this case contributes to $\mathrm{pq}(G)$ by $\binom{p-1}{2}$.
    
    \item $\psi=\left( \begin{smallmatrix}
    -a& 0  \\
    0 & b\\
    \end{smallmatrix} \right)$, 
    $1-\varphi-\psi=\left( \begin{smallmatrix}
    1& 0  \\
    0& 1-2b\\
    \end{smallmatrix} \right)$
    
    If $b\neq2^{-1}$, then rank$(\zobr)=2$ and we have $|\Gfg|=1$. This gives $c_1$ admissible triples $(\varphi, \psi, c)$, where by $c_1$ we denote the number of pairs $(a, b)$ satisfying $0<a<b$ and $b\neq2^{-1}$. If $b=2^{-1}$, rank$(\zobr)=1$ and we choose $\Gfg=\{\vec{0},\,\vec{w}\}$, $\vec{w}\notin\text{Im}(\zobr)$, which yields $2d_1$ triples $(\varphi, \psi, c)$, where $d_1$ denotes the number of possibilities to choose $a$ that satisfies $0<a<2^{-1}$.
    
    \item $\psi=\left( \begin{smallmatrix}
    a& 0  \\
    0 & -b\\
    \end{smallmatrix} \right)$, 
    $1-\varphi-\psi=\left( \begin{smallmatrix}
    1-2a & 0  \\
    0& 1\\
    \end{smallmatrix} \right)$
    
Analogously to the previous case, if $a\neq2^{-1}$, then the number of possible triples is $c_2$, which denotes the number of pairs $(a, b)$ satisfying $0<a<b$ and $a\neq2^{-1}$. Conversely, if $a=2^{-1}$, we get $2d_2$ possible triples, where $d_2$ denotes the number of possible choices of $b$ satisfying $2^{-1}<b$.

    \end{itemize}


Now we can calculate the values $c_1+c_2$ and $d_1+d_2$. The value $c_1+c_2=2\binom{p-2}{2}+(p-2)$ corresponds to the choice of $0<a<b$ satisfying that $a\neq2^{-1}$ or $b\neq2^{-1}$, with the choices where $a\neq2^{-1}$ and $b\neq2^{-1}$ counted twice. The value $d_1+d_2=p-2$ is equal to the number of pairs $(a, b)$, where $a=2^{-1}$ or $b=2^{-1}$.

We can now move to the pairs of automorphisms $(\varphi, \psi)$, which are specific for the subcase $b=-a$, therefore we will consider $\varphi=\m  a00{-a}$.

\begin{itemize}
    \item $\psi=\left( \begin{smallmatrix}
    a& 0  \\
    1 & -a\\
    \end{smallmatrix} \right)$, 
    $1-\varphi-\psi=\left( \begin{smallmatrix}
    1-2a & 0  \\
    -1& 1+2a\\
    \end{smallmatrix} \right)$
    
    Matrix $\zobr$ is regular if and only if $a\neq\pm 2^{-1}$. Then $|\Gfg|=1$ and the number of contributed triples $(\varphi, \psi, c)$ is equal to the number of possible automorphisms $\varphi$, which is $\frac{p-1}{2}-1=\frac{p-3}{2}$, because $0<a<-a$. Otherwise, rank$(\zobr)=1$ and $a=\pm 2^{-1}$. Since $0<a<-a$, there is only one possible choice of $a$. Under these conditions, we choose $\Gfg=\{\vec{0},\,\vec{w}\}$, where $\vec{w}\notin\text{Im}(\zobr)$, and get $2$ possible triples.
    
    \item $\psi=\left( \begin{smallmatrix}
    -a& 0  \\
    1 & a\\
    \end{smallmatrix} \right)$, 
    $1-\varphi-\psi=\left( \begin{smallmatrix}
    1 & 0  \\
    -1& 1  \\
    \end{smallmatrix} \right)$
    
    The matrix $\zobr$ is always regular, therefore $|\Gfg|=1$ and this case contributes to $\mathrm{pq}(G)$ by $\frac{p-1}{2}$, because condition $0<a<-a$ is satisfied by half of the non-zero elements in $\Z_p$. 
    
    \item $\psi=\left( \begin{smallmatrix}
    k& 1  \\
    a^2-k^2 & -k\\
    \end{smallmatrix} \right),\; k\in \mathbb{Z}_p$, 
    $1-\varphi-\psi=\left( \begin{smallmatrix}
    1-a-k & -1  \\
    k^2-a^2 & 1+a+k  \\
    \end{smallmatrix} \right)$
    
Clearly, we have rank$(\zobr)>0$. 
In particular, rank$(\zobr)=2$ if and only if det$(\zobr)\neq0$, equivalently $k\neq2^{-1}a^{-1}-a$. Then $|\Gfg|=1$ and contribution to the value $\mathrm{pq}(G)$ is $\frac{(p-1)^2}{2}$, because $0<a<-a$. Conversely, if det$(\zobr)=0$, then rank$(1-\varphi-\psi)=1$ and $k=2^{-1}a^{-1}-a$. Hence we choose $\Gfg=\{\vec{0},\,\vec{w}\}$, where $\vec{w}~\notin~\text{Im}(\zobr)$, and the number of admissible triples $(\varphi, \psi, c)$ is $2\cdot\frac{p-1}{2}=p-1$.
    
\end{itemize}


Using the values of the expressions $c_1+c_2$ and $d_1+d_2$ calculated above, we count the total number of contributed triples $(\varphi, \psi, c)$ in this case: 
$\binom{p-2}{2}+2(p-2)+\binom{p-1}{2}+c_1+2d_1+c_2+2d_2+ \frac{p-3}{2}+2+\frac{p-1}{2}+\frac{(p-1)^2}{2}+p-1=\frac{5}{2}p^2-3p-\frac{1}{2}$.

\textbf{Case $\varphi=\m a10a$}: Using the method introduced in Subsection \ref{s4:sqrroot}, we obtain
    \[S_\varphi=\left\{ \pm
    \left( \begin{smallmatrix}
    a& 1 \\
    0 & a\\
    \end{smallmatrix} \right)
    \right\}.\]
Since both matrices in $S_\varphi$ commute with the matrices in $C(\varphi)$, we choose $Y_\varphi=S_\varphi$.

Finally, we can choose the sets $\Gfg$ for the pairs of the automorphisms $(\varphi, \psi)$ and find the number of contributed quasigroups.    
    \begin{itemize}
    
        \item $\psi=\left( \begin{smallmatrix}
    a& 1  \\
    0 & a\\
    \end{smallmatrix} \right)$, 
    $1-\varphi-\psi=\left( \begin{smallmatrix}
    1-2a& -2  \\
    0 & 1-2a\\
    \end{smallmatrix} \right)$
    
    We have rank$(\zobr)=2$ if and only if $a\neq2^{-1}$. Then $|\Gfg|=1$ and the number of contributed triples is $p-2$. Otherwise, it holds that rank$(\zobr)=1$ and $a=2^{-1}$, hence we choose $\Gfg=\{\vec{0},\,\vec{w}\}$, where $\vec{w}\notin\text{Im}(\zobr)$, and get $2$ possible triples.
    
        \item $\psi=\left( \begin{smallmatrix}
    -a& -1  \\
    0 & -a\\
    \end{smallmatrix} \right)$, 
    $1-\varphi-\psi=\left( \begin{smallmatrix}
    1& 0  \\
    0 & 1\\
    \end{smallmatrix} \right)$
    
    For every $a$ holds that rank$(\zobr)=2$, therefore there are as many admissible triples $(\varphi, \psi, c)$ as possible values of $a$, which is $p-1$.

    \end{itemize}
  
This case contributes to $\mathrm{pq}(G)$ the total of $(p-2)+2+(p-1)=2p-1$ triples $(\varphi, \psi, c)$.


\textbf{Case $\varphi=\m 01ab$}: The set $S_\varphi$ clearly contains the matrices $\pm \varphi$. Both matrices commute with the elements of $C(\varphi)$ and therefore have to be elements of the set $Y_\varphi$.

Consider $\psi\in Y_\varphi$. We distinguish the following two cases.

\textit{Subcase $\psi=\pm \varphi$:}
 \begin{itemize}
    
        \item $\psi=\left( \begin{smallmatrix}
    0& 1  \\
    a & b\\
    \end{smallmatrix} \right)$, 
    $1-\varphi-\psi=\left( \begin{smallmatrix}
    1& -2  \\
    -2a & 1-2b\\
    \end{smallmatrix} \right)$
    
    Determinant of the matrix $\zobr$ is $1-2b-4a$, hence it is singular if and only if $4a=1-2b$, equivalently $b^2+4a=(b-1)^2$, which contradicts irreducibility of the polynomial $x^2-bx-a$. Therefore, $|\Gfg|=1$. Since there are $\frac{1}{2}(p^2-p)$ monic irreducible polynomials of degree 2 over $\mathbb{F}_p$, this case yields $\frac{1}{2}(p^2-p)$ triples $(\varphi, \psi, c)$.
    
        \item $\psi=\left( \begin{smallmatrix}
    0& -1  \\
    -a & -b\\
    \end{smallmatrix} \right)$, 
    $1-\varphi-\psi=\left( \begin{smallmatrix}
    1& 0  \\
    0 & 1\\
    \end{smallmatrix} \right)$
    
    The matrix $\zobr$ is regular for all $a, b$, therefore, $|\Gfg|=1$ and similarly to the previous case, this case contributes to $\mathrm{pq}(G)$ by $\frac{1}{2}(p^2-p)$.

    \end{itemize}

\textit{Subcase $\psi\neq\pm \varphi$:}
Using the method from Subsection \ref{s4:sqrroot} and that $b^2+4a$ is a non-square modulo $p$ (because of the irreducibility of $x^2-bx-a$), we observe that if $b\neq 0$, then $Y_\varphi=S_\varphi=\left\{ \pm
    \left( \begin{smallmatrix}
    0& 1  \\
    a & b\\
    \end{smallmatrix} \right)
    \right\}$. Therefore necessarily $b=0$ in this case and $\varphi= \m 01a0$ with $a$ non-square modulo $p$. The set of all square roots of $\varphi^2$ (including $\pm\varphi$) is then
\[S_\varphi=\left\{
    \left( \begin{smallmatrix}
    k& l \\
    l^{-1}(a-k^2) & -k\\
    \end{smallmatrix} \right):\: l\neq0
    \right\}.\]
    
In this case, we are not able to list the orbit representatives of the conjugation action of $C(\varphi)$ on the set $S_\varphi$. Instead, we will use Burniside's lemma to find the number of triples $(\varphi, \psi, c)$. Firstly, we find the number of the conjugacy classes and their cardinalities. Then, independently of the particular conjugacy class representatives, we find the cardinality of the set $\Gfg$ depending on the conjugacy class of $\psi$.

If we set $k=0$, $l=1$ in the matrix $\m kl{l^{-1}(a-k^2)}{-k}$, we get $\m 01a0$. Similarly, the choice $k=0$, $l=-1$ leads to the matrix $\m 0{-1}{-a}0$. These two matrices are equal to $\pm \varphi$ and represent single element conjugacy classes.

Let us denote matrices $C(u, v)=\left( \begin{smallmatrix}
    u& v \\
    av & u\\
    \end{smallmatrix} \right)$ from $C(\varphi)$ and $S(k, l)=\left( \begin{smallmatrix}
    k& l \\
    l^{-1}(a-k^2) & -k\\
    \end{smallmatrix} \right)$ from $S_\varphi$. It is straightforward to find for which values of $u, v, k, l$ the equality $C(u, v)S(k, l)C(u, v)^{-1}=S(k, l)$ holds. If $v=0$, then the equality holds for all admissible values $k, l, u$ and matrix $C(u, 0)$ commutes with all elements in $S_\varphi$. If $v\neq0$, then the equality holds if and only if $k=0$ and $l=\pm1$.
  
This observation has two crucial consequences. Firstly, we can use it to determine the number of orbits of the conjugation action of $C(\varphi)$ on $S_\varphi$. The centralizer $C(\varphi)$ contains $p-1$ matrices of the form $C(u, 0)$, each of them with $|S_\varphi|=p(p-1)$ fixed points. The remaining $p(p-1)$ matrices in $C(\varphi)$ are in the form $C(u, v)$, where $v\neq0$, and have only $2$ fixed points, in particular $S(0, 1)$ and $S(0, -1)$. For a matrix $C\in C(\varphi)$ denote $f_C$ the number of its fixed points of the conjugation action. Then we obtain from Burnside's lemma that the number of orbits of this action is
\[\frac{1}{|C(\varphi)|}\sum_{C\in C(\varphi)}|f_C|=
\frac{1}{p^2-1}(p(p-1)(p-1)+2p(p-1))=p.\]

The second consequence is the sizes of the orbits. We already know that the matrices $S(0, \pm1)$ have single element orbits. Using the observation above, we obtain that the other elements in $S_\varphi$ are stabilized only by $p-1$ diagonal matrices $C(u, 0)$, $u\neq0$. Therefore, the sizes of the remaining $p-2$ orbits are equal to $|C(\varphi)|/(p-1)=p+1$. 
Overall, the conjugation action of $C(\varphi)$ on $S_\varphi$ has two single element orbits and $p-2$ orbits with $p+1$ elements.

Now we are able to find the size of the set $\Gfg$ for the pairs of automorphisms $(\varphi, \psi)$, $\psi\in Y_\varphi\setminus\{\pm \varphi\}$.  
Consider $\psi=\psi(k,l)=\left( \begin{smallmatrix}
    k& l \\
    l^{-1}(a-k^2) & -k\\
    \end{smallmatrix} \right)$ for some $k$, $l$.
    Then we have 
    \[1-\varphi-\psi=
    \begin{pmatrix}
    1-k & -1-l \\
    -a-l^{-1}(a-k^2)& 1+k\\
    \end{pmatrix}.
    \]
At least one of the values $1-k$ a $1+k$ is nonzero, hence the rank of the matrix $1-\varphi-\psi$ is either one or two, and the rank is one if and only if $\textup{det}(\zobr)=0$. If the matrix is regular, then $|\Gfg|=1$ and there is only one possible constant $c$ for the pair $(\varphi, \psi)$. Otherwise, we choose $\Gfg=\{\vec{0}, \vec{w}\}$, $\vec{w}\notin \img{\zobr}$, so there are two possible constants $c\in\Gfg$.

The determinant 
\begin{equation*}
   \textup{det}(\zobr)=(1-k)(1+k)-(-1-l)(-a-l^{-1}(a-k^2))
\end{equation*}
is equal to zero if and only if
\begin{equation}\label{det}
    k^2-al^2+(1-2a)l-a=0.
\end{equation}

In Subsection \ref{s4:solutions} we showed that there are precisely $p+1$ pairs $(k, l)\in\Z_p^2$ satisfying the equation. Since $a$ is a non-square, every solution $(k, l)$ satisfies $l\neq0$, and therefore correspond to a matrix from $S_\varphi$. Furthermore, the pairs $(0, 1)$ and $(0, -1)$ do not satisfy the equation \ref{det}, hence all solutions $(k, l)$ correspond to matrices $\psi(k, l)$ with orbits with $p+1$ elements in the conjugation action of $C(\varphi)$ on $S_\varphi$.

The size of the set $\Gfg$, where $\psi=\psi(k, l)$, depends on whether the pair $(k, l)$ satisfies the equation \ref{det}. Also, $|\Gfg|$ does not depend on the choice of a particular representative $\psi$ in its orbit, according to Theorem \ref{t:alg}. Since the equation \ref{det} has precisely $p+1$ solutions, all these solutions correspond to the elements of the same orbit, that is, to the one element of the set of orbit representatives $Y_\varphi$.

We can now find the number of the remaining paramedial quasigroups (i.e., the remaining possible triples $(\varphi, \psi, c)$). There are $(p-1)/2$ possible choices of the matrix $\varphi$ (corresponding to non-squares modulo $p$). For every $\varphi$, there are $p-2$ choices of automorphism $\psi\in Y_\varphi\setminus\{\pm\varphi\}$, and for precisely one $\psi^*\in Y_\varphi$ holds that $|G_{\varphi, \psi^*}|=2$, while for the remaining $p-3$ we have $|\Gfg|=1$. The number of the remaining triples $(\varphi, \psi, c)$, $c\in \Gfg$, is therefore
    \[\frac{p-1}{2}(1\cdot2+(p-3)\cdot1)=\frac{1}{2}p^2-p+\frac{1}{2}.\]
    
Summarized, the case dealing with automorphism $\varphi$ without eigenvalues in $\Z_p$ contributes to $\mathrm{pq}(G)$ by $\frac{p^2-p}{2}+\frac{p^2-p}{2}+(\frac{p^2}{2}-p+\frac{1}{2})=\frac{3}{2}p^2-2p+\frac{1}{2}$.

Summing up all the contributions (see the last column of Table \ref{tab:main}), we obtain that the total number is
\[\bigl(3p-1\bigr)+\bigl(\frac{5}{2}p^2-3p-\frac{1}{2}\bigr)+\bigl(2p-1\bigr)+\bigl(\frac{3}{2}p^2-2p+\frac{1}{2}\bigr)=4p^2-2.\] 
\end{proof}



\begin{center}

\begin{tabular}{|l|l|L{6cm}|l|}
\hline
$\varphi$   & $\psi$ & $c$ & number\\
\hline
\multirow{5}{5em}{$\begin{pmatrix}
    a& 0  \\
    0 & a\\
  \end{pmatrix}$ $a\neq0$ } & \multirow{2}{5em}{$\begin{pmatrix}
    a& 0  \\
    0 & a\\
  \end{pmatrix}$} & $\begin{pmatrix}
    0\\
    0\\
  \end{pmatrix}$, if $a\neq2^{-1}$& $p-2$ \\\cline{3-4} & & $\begin{pmatrix}
    0\\
    0\\
  \end{pmatrix}$, $\begin{pmatrix}
    1\\
    0\\
  \end{pmatrix}$, if $a=2^{-1}$ & 2 \\
\cline{2-4}
& $\begin{pmatrix}
    -a& 0  \\
    0 & -a\\
  \end{pmatrix}$ & $\begin{pmatrix}
    0\\
    0\\
  \end{pmatrix}$ & $p-1$ \\
\cline{2-4}
& \multirow{2}{5em}{$\begin{pmatrix}
    a& 0  \\
    0 & -a\\
  \end{pmatrix}$} & $\begin{pmatrix}
    0\\
    0\\
  \end{pmatrix}$, if $a\neq2^{-1}$ & $p-2$ \\ \cline{3-4} &&  $\begin{pmatrix}
    0\\
    0\\
  \end{pmatrix}$, $\begin{pmatrix}
    1\\
    0\\
  \end{pmatrix}$, if $a=2^{-1}$ & 2 \\
\hline
\end{tabular}

\begin{tabular}{|l|L{4.8cm}|L{6cm}|l|}
\hline
$\varphi$   & $\psi$ & $c$ & number\\
\hline
\multirow{5}{5em}{$\begin{pmatrix}
    a& 0  \\
    0 & b\\
  \end{pmatrix}$ $0<a<b$} & \multirow{2}{5em}{$\begin{pmatrix}
    a& 0  \\
    0 & b\\
  \end{pmatrix}$} & $\begin{pmatrix}
    0\\
    0\\
  \end{pmatrix}$, if $a, b\neq2^{-1}$ & $\binom{p-2}{2}$ \\\cline{3-4} & &  $\begin{pmatrix}
    0\\
    0\\
  \end{pmatrix}$, $\begin{pmatrix}
    1\\
    1\\
  \end{pmatrix}$, if $a=2^{-1}\vee b=2^{-1}$ & $2(p-2)$ \\
\cline{2-4}
& $\begin{pmatrix}
    -a& 0  \\
    0 & -b\\
  \end{pmatrix}$ & $\begin{pmatrix}
    0\\
    0\\
  \end{pmatrix}$ & $\binom{p-1}{2}$ \\
\cline{2-4}
& \multirow{2}{5em}{$\begin{pmatrix}
    \pm a& 0  \\
    0 & \mp b\\
  \end{pmatrix}$} & $\begin{pmatrix}
    0\\
    0\\
  \end{pmatrix}$, if $a\neq2^{-1}$ or $b\neq2^{-1}$, resp. \newline (depends on the choice of the signs) & 
  $2\binom{p-2}{2}+p-2$
  \\ \cline{3-4} &&  $\begin{pmatrix}
    0\\
    0\\
  \end{pmatrix}$, $\begin{pmatrix}
    1\\
    1\\
  \end{pmatrix}$, \newline if $a=2^{-1}$ or $b=2^{-1}$, resp. \newline (depends on the choice of the signs) & $2(p-2)$ \\
\hline
\multirow{5}{5.2em}{$\begin{pmatrix}
    a& 0  \\
    0 & -a\\
  \end{pmatrix}$ $0<a<-a$} & \multirow{2}{5em}{$\begin{pmatrix}
    a& 0  \\
    1 & -a\\
  \end{pmatrix}$} & $\begin{pmatrix}
    0\\
    0\\
  \end{pmatrix}$, if $a\neq\pm2^{-1}$ & $\frac{p-3}{2}$\\\cline{3-4} & & $\begin{pmatrix}
    0\\
    0\\
  \end{pmatrix}$, $\begin{pmatrix}
    1\\
    0\\
  \end{pmatrix}$,\newline if $a=2^{-1}$ or $a=-2^{-1}$, resp. & 2 \\
\cline{2-4}
& $\begin{pmatrix}
    -a& 0  \\
    1 & a\\
  \end{pmatrix}$ & $\begin{pmatrix}
    0\\
    0\\
  \end{pmatrix}$ & $\frac{p-1}{2}$ \\
\cline{2-4}
& \multirow{2}{6.7em}{$\begin{pmatrix}
    k& 1  \\
    a^2-k^2 & -k\\
  \end{pmatrix}$} & $\begin{pmatrix}
    0\\
    0\\
  \end{pmatrix}$, if $k\neq2^{-1}a^{-1}-a$& $\frac{(p-1)^2}{2}$ \\ \cline{3-4} && $\begin{pmatrix}
    0\\
    0\\
  \end{pmatrix}$, $\begin{pmatrix}
    0\\
    1\\
  \end{pmatrix}$, if $k=2^{-1}a^{-1}-a$  & $p-1$ \\
\hline
\multirow{3}{5.5em}{$\begin{pmatrix}
    a& 1 \\
    0 & a\\
  \end{pmatrix}$ $a\neq0$} & \multirow{2}{5em}{$\begin{pmatrix}
    a& 1 \\
    0 & a\\
  \end{pmatrix}$} & $\begin{pmatrix}
    0\\
    0\\
  \end{pmatrix}$, if $a\neq2^{-1}$ & $p-2$ \\\cline{3-4} & & $\begin{pmatrix}
    0\\
    0\\
  \end{pmatrix}$, $\begin{pmatrix}
    0\\
    1\\
  \end{pmatrix}$, if $a=2^{-1}$ & 2 \\
\cline{2-4}
& $\begin{pmatrix}
    -a& -1 \\
    0 & -a\\
  \end{pmatrix}$ & $\begin{pmatrix}
    0\\
    0\\
  \end{pmatrix}$ & $p-1$ \\
\hline
\multirow{2}{5.5em}{$\begin{pmatrix}
    0& 1 \\
    a & b\\
  \end{pmatrix}$ $x^2-bx-a$ irreducible} & $\begin{pmatrix}
    0& 1 \\
    a & b\\
  \end{pmatrix}$ & $\begin{pmatrix}
    0\\
    0\\
  \end{pmatrix}$ & $\frac{p^2-p}{2}$\\[12pt]
\cline{2-4}
&$\begin{pmatrix}
    0& -1 \\
    -a & -b\\
  \end{pmatrix}$ & $\begin{pmatrix}
    0\\
    0\\
  \end{pmatrix}$ & $\frac{p^2-p}{2}$\\[12pt]
\hline
\multirow{2}{5.5em}{$\begin{pmatrix}
    0& 1 \\
    a & 0\\
  \end{pmatrix}$ $x^2-a$ irreducible} &
  $p-3$ matrices of the form \newline
  $\mm {k}{l}{l^{-1}(a-k^2)}{-k}$
  \newline $l\neq0$, $(k,l)\neq(0,\pm1)$
  & $\begin{pmatrix}
    0\\
    0\\
  \end{pmatrix}$  & $\frac{(p-1)(p-3)}{2}$\\[15pt]
\cline{2-4}
& 
  1 matrix of the form \newline
  $\mm {k}{l}{l^{-1}(a-k^2)}{-k}$
  \newline arbitrary $(k,l)$ satisfying (\ref{det})
& $\begin{pmatrix}
    0\\
    0\\
  \end{pmatrix}$, $\vec{w}$, $\vec{w}\notin\text{Im}(\zobr)$  & $p-1$\\[15pt]
\hline
\end{tabular}
\captionof{table}{Affine forms of simple paramedial quasigroups of order $p^2$, up to isomorphism ($a,b\in\Z_p$)}
\label{tab:main}

\end{center}


\section{Simple paramedial quasigroups of prime square order}\label{s5}

In affine quasigroups, quasigroup congruences correspond to subgroups of the underlying group invariant with respect to the defining automorphisms. Formally, it can be stated as follows (and proved fairly easily).

\begin{proposition}[{\cite[Theorems 41 and 42]{KN2}}]\label{p:con}
Let $Q=\Q(G,\varphi,\psi,c)$ be an affine quasigroup and $\alpha$ an equivalence on $Q$. Then $\alpha$ is a congruence of $Q$ if and only if there is a subgroup $N\leq G$ satisfying $\varphi(N)=\psi(N)=N$ such that 
\[ (a,b)\in\alpha\ \Leftrightarrow\ a-b\in N \]
\end{proposition}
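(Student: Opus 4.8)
The plan is to prove the two implications separately. The \emph{if} direction is a one-line computation; for the \emph{only if} direction the goal is to show that an arbitrary congruence is the coset partition of a $\varphi,\psi$-invariant subgroup. For sufficiency, suppose $N\le G$ satisfies $\varphi(N)=\psi(N)=N$ and let $\alpha$ be the coset equivalence defined by $(a,b)\in\alpha\Leftrightarrow a-b\in N$. To check it is a congruence, take $(a,a'),(b,b')\in\alpha$; since
\[ (a*b)-(a'*b')=\varphi(a-a')+\psi(b-b')\in \varphi(N)+\psi(N)=N, \]
we get $(a*b,a'*b')\in\alpha$, as required. (Note this direction uses only $\varphi(N),\psi(N)\subseteq N$.)

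For the necessity direction, let $\alpha$ be a congruence; the crux is to show that $\alpha$ is invariant under every group translation $x\mapsto x+g$. Because $\alpha$ is a congruence, each right translation $R_t\colon x\mapsto x*t=\varphi(x)+\psi(t)+c$ respects $\alpha$, and writing $h=\psi(t)+c$, which ranges over all of $G$ since $\psi$ is onto, this reads
\[ (a,b)\in\alpha\ \Longrightarrow\ (\varphi(a)+h,\ \varphi(b)+h)\in\alpha\quad\text{for all }h\in G. \]
The two entries differ by $\varphi(a-b)$, so if some pair of difference $d=a-b$ lies in $\alpha$, then $(x,x-\varphi(d))\in\alpha$ for \emph{every} $x\in G$. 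Iterating this implication $m=\mathrm{ord}(\varphi)$ times and using $\varphi^m=\mathrm{id}$ (here I use that $G$, hence $\aut G$, is finite) gives $(x,x-d)\in\alpha$ for all $x$. Thus membership in $\alpha$ depends only on $a-b$, and setting $N=\{g\in G:(g,0)\in\alpha\}$ we obtain $(a,b)\in\alpha\Leftrightarrow a-b\in N$.

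It then remains to verify that $N$ is a $\varphi,\psi$-invariant subgroup. Reflexivity gives $0\in N$, while translation invariance combined with symmetry and transitivity of $\alpha$ yields closure under negation and addition, so $N\le G$. Applying the displayed implication, together with its left-translation analogue $(a,b)\in\alpha\Rightarrow(\psi(a)+k,\psi(b)+k)\in\alpha$ coming from $L_s\colon x\mapsto s*x=\varphi(s)+\psi(x)+c$, to a pair $(g,0)$ with $g\in N$ shows $\varphi(g),\psi(g)\in N$; hence $\varphi(N)\subseteq N$ and $\psi(N)\subseteq N$, and since $G$ is finite and $\varphi,\psi$ are injective these inclusions are equalities.

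The main obstacle is establishing the translation invariance of $\alpha$: being a congruence guarantees a priori only that the quasigroup translations $R_t,L_s$ respect $\alpha$, not their inverses, so one must exploit the bijectivity of $\varphi,\psi$. The iteration above does this through finiteness of $\aut G$; conceptually the same fact follows because $R_tR_{t'}^{-1}$ is the group translation by $\psi(t-t')$, so the multiplication group of $Q$ already contains every translation of $G$, but that route additionally requires knowing that quasigroup congruences are respected by inverse translations (which is again where finiteness, or the general theory of quasigroups, enters).
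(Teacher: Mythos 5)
Your proof is correct. There is nothing in the paper to compare it against: Proposition~\ref{p:con} is imported from Kepka and N\v{e}mec \cite{KN2} with only the remark that it can be ``proved fairly easily'', so yours is the only argument on the table, and it is a sound, self-contained one in the finite setting. The sufficiency direction is the expected one-line computation (and, as you note, uses only the inclusions $\varphi(N),\psi(N)\subseteq N$). For necessity, the genuine content is exactly the point you isolate: a congruence is a priori preserved only by the quasigroup translations $R_t,L_s$, and one must bootstrap this to invariance under all group translations. Your device --- observing that $(a,b)\in\alpha$ forces every pair with difference $\varphi(a-b)$ into $\alpha$, then iterating $\mathrm{ord}(\varphi)$ times to return to the difference $a-b$ itself --- does this cleanly, and the rest (that $N=\{g:(g,0)\in\alpha\}$ is a subgroup with $\varphi(N),\psi(N)\subseteq N$, hence $=N$ by injectivity and finiteness) is routine. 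The one caveat is that your argument needs $G$ finite, whereas the proposition is stated for arbitrary affine quasigroups: over an infinite group a congruence of the groupoid $(Q,*)$ need only give $\varphi(N)\subseteq N$ rather than equality (consider the shift automorphism of $\bigoplus_{i\in\Z}\Z_2$ and $N=\bigoplus_{i\geq 0}\Z_2$), so the statement with equality requires either finiteness or reading ``congruence'' as respecting the division operations as well. Since the paper only ever applies the proposition to $\Z_p^2$, this costs nothing here, and you flag the use of finiteness explicitly, which is the right thing to do.
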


A quasigroup $Q$ is called \emph{simple} if it possesses no proper congruences (i.e., every homomorphism from $Q$ is injective or trivial). 

\begin{corollary}
An affine quasigroup $Q=\Q(G,\varphi,\psi,c)$ is simple if and only if the group $G$ contains no proper subgroup $N$ satisfying $\varphi(N)=\psi(N)=N$.
\end{corollary}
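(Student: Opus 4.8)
The plan is to read the statement off directly from Proposition \ref{p:con}, which already identifies the congruences of $Q=\Q(G,\varphi,\psi,c)$ with the subgroups $N\leq G$ satisfying $\varphi(N)=\psi(N)=N$. First I would observe that this identification is in fact a bijection. Given such an invariant subgroup $N$, the relation $(a,b)\in\alpha_N\Leftrightarrow a-b\in N$ is the associated congruence; conversely, from a congruence $\alpha$ one recovers its subgroup as the block of $0$, since $[0]_\alpha=\{b:(0,b)\in\alpha\}=\{b:-b\in N\}=N$, using that $N$ is closed under negation. Hence $\alpha\mapsto N$ and $N\mapsto\alpha_N$ are mutually inverse, and the congruences of $Q$ are parametrized exactly by the $\{\varphi,\psi\}$-invariant subgroups of $G$.

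Next I would pin down the two extreme congruences. The subgroup $N=\{0\}$ is always invariant and corresponds under Proposition \ref{p:con} to the identity relation $(a,b)\in\alpha\Leftrightarrow a=b$, whose quotient map is injective; the subgroup $N=G$ is likewise always invariant and corresponds to the all-relation $Q\times Q$, whose quotient map is trivial. By the definition of simplicity, these are precisely the two congruences that are \emph{not} proper, so $Q$ is simple if and only if they are the only congruences, i.e.\ if and only if $\{0\}$ and $G$ are the only invariant subgroups.

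With this dictionary the equivalence is immediate in both directions. If $G$ has a subgroup $N$ with $\{0\}\neq N\neq G$ and $\varphi(N)=\psi(N)=N$, then $\alpha_N$ is a congruence distinct from both the identity relation and the all-relation, hence a proper congruence, so $Q$ is not simple. Conversely, a proper congruence $\alpha$ of a non-simple $Q$ yields, through the bijection above, an invariant subgroup $N$ different from the ones attached to the two trivial congruences, that is $\{0\}\neq N\neq G$, giving a proper invariant subgroup of $G$.

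Since everything is a translation through Proposition \ref{p:con}, there is no genuine obstacle and no computation. The only point deserving care is the reading of the word ``proper'': it must exclude both the trivial subgroup $\{0\}$ and the whole group $G$, matching the exclusion of the identity relation and the all-relation in the definition of a simple quasigroup. Once these two trivial ends of the correspondence are matched, the bijection delivers the corollary at once.
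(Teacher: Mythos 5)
Your proposal is correct and follows exactly the route the paper intends: the corollary is stated as an immediate consequence of Proposition \ref{p:con}, and your translation (congruences $\leftrightarrow$ invariant subgroups, with $\{0\}$ and $G$ matching the identity and all-relations) is precisely that argument, including the correct reading of ``proper'' as excluding both $\{0\}$ and $G$, consistent with the paper's later use (``a proper subgroup $N\leq\Z_p^2$, i.e., a subspace of dimension 1'').
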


In particular, the underlying abelian group must not contain any proper characteristic subgroups, hence, must be elementary abelian.

In the rest of the paper, we classify simple paramedial quasigroups over the group $\Z_p^2$ by sorting out Table \ref{tab:main}. The method is based on the following observation: $Q=\Q(\Z_p^2,\varphi,\psi,c)$ is not simple if and only if there is a proper subgroup $N\leq\Z_p^2$, i.e., a subspace of dimension 1, which is invariant with respect to $\varphi,\psi$. In other words, if $\varphi,\psi$ share a common eigenvector. 

The automorphisms $\varphi$ and $\psi$, where $\varphi$ is a matrix without eigenvalues in $\Z_p$, trivially do not share an eigenvector. For all other choices of $\varphi$ we can easily conclude from Table \ref{tab:main} that the automorphisms $\varphi$ and $\psi$ do not share an eigenvector if and only if $\varphi=\m a00{-a}$ and $\psi=\m k1{a^2-k^2}{-k}$, $k\neq\pm a$. In this case, we need to calculate the number of quasigroups $\Q(G, \varphi, \psi, c)$, where $\varphi$, $\psi$ satisfy the stated condition. 

\smallskip
\begin{center}

\begin{tabular}{|l|L{4.8cm}|L{5.1cm}|l|}
\hline
$\varphi$ & $\psi$ & $c$ & number \\
\hline
\multirow{2}{5.5em}{$\mm a00{-a}$ $0<a<-a$}& \multirow{2}{7em}{$\mm k1{a^2-k^2}{-k}$ $k\neq\pm a$} & $\begin{pmatrix}
    0\\
    0\\
  \end{pmatrix}$, if $k\neq2^{-1}a^{-1}-a$& $\frac{p^2-4p+5}{2}$ \\ \cline{3-4} && $\begin{pmatrix}
    0\\
    0\\
  \end{pmatrix}$, $\begin{pmatrix}
    0\\
    1\\
  \end{pmatrix}$, if $k=2^{-1}a^{-1}-a$  & $p-3$ \\
\hline
\multirow{2}{5.5em}{$\begin{pmatrix}
    0& 1 \\
    a & b\\
  \end{pmatrix}$ $x^2-bx-a$ irreducible} & $\begin{pmatrix}
    0& 1 \\
    a & b\\
  \end{pmatrix}$ & $\begin{pmatrix}
    0\\
    0\\
  \end{pmatrix}$ & $\frac{p^2-p}{2}$\\[12pt]
\cline{2-4}
&$\begin{pmatrix}
    0& -1 \\
    -a & -b\\
  \end{pmatrix}$ & $\begin{pmatrix}
    0\\
    0\\
  \end{pmatrix}$ & $\frac{p^2-p}{2}$\\[12pt]
\hline
\multirow{2}{5.5em}{$\begin{pmatrix}
    0& 1 \\
    a & 0\\
  \end{pmatrix}$ $x^2-a$ irreducible} & 
  $p-3$ matrices of the form \newline
  $\mm {k}{l}{l^{-1}(a-k^2)}{-k}$
  \newline $l\neq0$, $(k,l)\neq(0,\pm1)$
  & $\begin{pmatrix}
    0\\
    0\\
  \end{pmatrix}$  & $\frac{(p-1)(p-3)}{2}$\\[15pt]
\cline{2-4}
&
 1 matrix of the form \newline
  $\mm {k}{l}{l^{-1}(a-k^2)}{-k}$
  \newline arbitrary $(k,l)$ satisfying (\ref{det})
& $\begin{pmatrix}
    0\\
    0\\
  \end{pmatrix}$, $\vec{w}$, $\vec{w}\notin\text{Im}(\zobr)$  & $p-1$\\[15pt]
\hline
\end{tabular}
\captionof{table}{Affine forms of simple paramedial quasigroups of order $p^2$, up to isomorphism.}
\label{tab:simple}
\end{center}

Firstly, we observe that $a=2^{-1}a^{-1}-a$ if and only if $a=\pm2^{-1}$ and for all $a\in\Z_p\setminus\{0\}$ $-a\neq2^{-1}a^{-1}-a$. If $a=\pm2^{-1}$ (only one choice satisfies $0<a<-a$), then $k\neq\pm a$ implies $k\neq2^{-1}a^{-1}-a$, therefore there is only one possible constant $c$ and the number of admissible triples $(\varphi, \psi, c)$ is $p-2$, which is the number of possible choices of $k\neq\pm a$.
For $a\neq\pm2^{-1}$, there are $\frac{p-1}{2}-1$ values of $a$ satisfying $0<a<-a$. Then the number of possible constants $c$ depends on the value of $k$. If $k\neq2^{-1}a^{-1}-a$, there are $p-3$ possible values of $k$ and only one possible constant $c$, hence the number of possible triples $(\varphi, \psi, c)$ is $\left(\frac{p-1}{2}-1\right)\cdot(p-3)$. Otherwise, there are two possible choices of constant $c$ and the number of triples $(\varphi, \psi, c)$ is $\left(\frac{p-1}{2}-1\right)\cdot2$.





\begin{thebibliography}{99} 

\bibitem{conicsmodp}
J. Baek, A. Deopurkar, K. Redfield, \emph{Points on conics modulo $p$}, \url{https://cs.stanford.edu/people/jbaek/18.821.paper3.pdf}. Accessed 26 April 2019.

\bibitem{CJK1}
J. R. Cho, J. Je\v zek, T. Kepka, \emph{Paramedial groupoids}. Czech. Math. J. 49/2 (1999), 277--290.

\bibitem{CJK2}
J. R. Cho, J. Je\v zek, T. Kepka, \emph{Simple paramedial groupoids}. Czech. Math. J. 49/2 (1999), 391--399.

\bibitem{CK1}
J. R. Cho, T. Kepka, \emph{Finite simple zeropotent paramedial groupoids}. Czechoslovak Math. J. 52/1 (2002), 41--53.

\bibitem{CK2}
J. R. Cho, T. Kepka, \emph{Infinite simple zeropotent paramedial groupoids}. Czech. Math. J. 53/4 (2003), 769--775.

\bibitem{Dra}
A. Dr\'apal, \emph{Group isotopes and a holomorphic action}, Result. Math. \textbf{54} (2009), no. \textbf{3}--\textbf{4}, 253--272.

\bibitem{HR}
C. J. Hillar, D. L. Rhea, \emph{Automorphisms of finite abelian groups}, Amer. Math. Monthly 114 (2007), 917--923.

\bibitem{Hou}
X. Hou, \emph{Finite modules over $\Z[t,t^{-1}]$}, J. Knot Theory Ramifications \textbf{21} (2012), no. \textbf{8}, 1250079, 28 pp.

\bibitem{Kir} 
O. U. Kirnasovsky, \emph{Binary and n-ary isotopes of groups: fundamental algebraic properties and characterizations}, PhD thesis, Kiev 2001 (Ukrainian).



\bibitem{KN2}
T. Kepka, P. N\v emec, \emph{T-quasigroups II}, Acta Univ. Carolin. Math. Phys. 12 (1971), no. 2, 31--49.

\bibitem{NK1} 
P. N\v emec, T. Kepka, \emph{T-quasigroups I}, Acta Univ. Carolin. Math. Phys. 12 (1971), no. 1, 39--49.

\bibitem{sqrroot}
S. Northshield, \emph {Square roots of $2\times2$ matrices}, \url{https://dspace.sunyconnect.suny.edu/bitstream/handle/1951/69937/fulltext.pdf}, 2010. Accessed 25 February 2019.
 
\bibitem{SP}
V. A. Shcherbacov, D. I. Pushkashu, \emph{On the structure of finite paramedial quasigroups.} Comment. Math. Univ. Carolin. 51 (2010), no. 2, 357--370.

\bibitem{Smi}
J. D. H. Smith, \emph{An introduction to quasigroups and their representations}, Chapman \& Hall/CRC, 2007.


\bibitem{Sta-AG}
D. Stanovsk\'y, \emph{Linear representation of Abel-Grassmann groups}, Carpathian J. Math. 33/2 (2017), 257--263

\bibitem{Sta-p2}
D. Stanovsk\'y, \emph{Medial quasigroups of prime square order}, Comment. Math. Univ. Carolinae. 57/4 (2016), 585--590. 

\bibitem{SV}
D. Stanovsk\'y, P. Vojt\v echovsk\'y, \emph{Central and medial quasigroups of small order},  Buletinul Academiei de \c Stiin\c te a Republicii Moldova, Matematica, 80/1 (2016), 24--40.

\end{thebibliography}
\end{document}